\documentclass[12pt,a4paper]{article}
\usepackage{mathrsfs}
\usepackage{indentfirst}
\setlength{\parskip}{3\lineskip}
\usepackage{amsmath,amssymb,amsfonts,amsthm,graphics}
\usepackage{makeidx}
\usepackage{color}
\include{PDF}
\voffset -20mm \rm \hoffset -8mm \rm \setlength{\textwidth}{160mm}
\setlength{\textheight}{225mm}

\newtheorem{theorem}{Theorem}[section]
\newtheorem{lemma}[theorem]{Lemma}

\newtheorem{corollary}[theorem]{Corollary}

\title{\bf The $(k,\ell)$-rainbow index for complete bipartite and
multipartite graphs\footnote{Supported by NSFC and the ``973'' program.}}
\author{\small Qingqiong Cai, Xueliang Li, Jiangli Song\\
{\small Center for Combinatorics and LPMC-TJKLC}\\ {\small Nankai
University}\\ {\small Tianjin 300071, China}\\ {\small Email:
cqqnjnu620@163.com, lxl@nankai.edu.cn,
songjiangli@mail.nankai.edu.cn}}
\date{}

\begin{document}
\maketitle

\begin{abstract}
A tree in an edge-colored graph $G$ is said to be a rainbow tree if
no two edges on the tree share the same color. Given two positive
integers $k$, $\ell$ with $k\geq 3$, the \emph{$(k,\ell)$-rainbow
index} $rx_{k,\ell}(G)$ of $G$ is the minimum number of colors
needed in an edge-coloring of $G$ such that for any set $S$ of $k$
vertices of $G$, there exist $\ell$ internally disjoint rainbow
trees connecting $S$. This concept was introduced by Chartrand et
al., and there have been very few results about it. In this paper,
we investigate the $(k,\ell)$-rainbow index for complete bipartite
graphs and complete multipartite graphs. Some asymptotic values of
their $(k,\ell)$-rainbow index are obtained.

\end{abstract}

\noindent{\bf Keywords:} rainbow index, complete bipartite graphs,
complete multipartite graphs, probabilistic method

\noindent{\bf AMS subject classification 2010:} 05C05, 05C15, 05D40.

\section {\large Introduction}

All graphs in this paper are undirected, finite and simple. We
follow \cite{Bondy} for graph theoretical notation and terminology
not defined here. Let $G$ be a nontrivial connected graph with an
\emph{edge-coloring} $c: E(G)\rightarrow\{1, 2,\cdots, t\}, t \in
\mathbb{N}$, where adjacent edges may be colored the same. A path of
$G$ is said to be a \emph{rainbow path} if no two edges on the path
have the same color. An edge-colored graph $G$ is called
\emph{rainbow connected} if for every pair of distinct vertices of
$G$ there exists a rainbow path connecting them. The \emph{rainbow
connection number} of $G$, denoted by $rc(G)$, is defined as the
minimum number of colors that are needed in order to make $G$
rainbow connected. The \emph{rainbow $k$-connectivity} of $G$,
denoted by $rc_{k}(G)$, is defined as the minimum number of colors
in an edge-coloring of $G$ such that every two distinct vertices of
$G$ are connected by $k$ internally disjoint rainbow paths. These
concepts were introduced by Chartrand et al. in \cite{ChartrandGP,
ChartrandGL}. Recently, a lot of results on the rainbow connections
have been published. We refer the reader to \cite{LiSun3, LiSun} for
details.

Similarly, a tree $T$ in $G$ is called a \emph{rainbow tree} if no
two edges of $T$ have the same color. For $S\subseteq V(G)$, a
\emph{rainbow $S$-tree} is a rainbow tree connecting the vertices of
$S$. Suppose $\{T_{1},T_{2},\cdots, T_{\ell}\}$ is a set of rainbow
$S$-trees. They are called \emph{internally disjoint} if
$E(T_{i})\cap E(T_{j})=\emptyset$ and $V(T_{i})\bigcap V(T_{j})=S$
for every pair of distinct integers $i,j$ with $1\leq i,j\leq \ell$
(note that these trees are vertex-disjoint in $G\setminus S$). Given
two positive integers $k$, $\ell$ with $k\geq 2$, the
\emph{$(k,\ell)$-rainbow index} $rx_{k,\ell}(G)$ of $G$ is the
minimum number of colors needed in an edge-coloring of $G$ such that
for any set $S$ of $k$ vertices of $G$, there exist $\ell$
internally disjoint rainbow $S$-trees. In particular, for $\ell=1$,
we often write $rx_{k}(G)$ rather than $rx_{k,1}(G)$ and call it the
\emph{$k$-rainbow index}. It is easy to see that
$rx_{2,\ell}(G)=rc_{\ell}(G)$. So the $(k,\ell)$-rainbow index can
be viewed as a generalization of the rainbow connectivity. In the
sequel, we always assume $k\geq 3$.

The concept of $(k,\ell)$-rainbow index was also introduced by
Chartrand et al.; see \cite{Chartrand}. They determined the
$k$-rainbow index of all unicyclic graphs and the $(3,\ell)$-rainbow
index of complete graphs for $\ell=1,2$. In \cite{Cai}, we
investigated the $(k,\ell)$-rainbow index of complete graphs. We
proved that for every pair of positive integers $k,\ell$ with $k\geq
3$, there exists a positive integer $N=N(k,\ell)$ such that
$rx_{k,\ell}(K_{n})=k$ for every integer $n\geq N$, which settled
down the two conjectures in \cite{Chartrand}.

In this paper, we apply the probabilistic method \cite{Alon} to
study a similar question for complete bipartite graphs and complete
multipartite graphs. It is shown that for $k\geq 4$,
$rx_{k,\ell}(K_{n,n})=k+1$  when $n$ is sufficiently large; whereas
for $k=3$, $rx_{k,\ell}(K_{n,n})=3$ for $\ell =1,2$ and
$rx_{k,\ell}(K_{n,n})=4$ for $\ell \geq3$ when $n$ is sufficiently
large. Moreover, we prove that when $n$ is sufficiently large,
$rx_{k,\ell}(K_{r\times n})=k$ for $k<r$; $rx_{k,\ell}(K_{r\times
n})=k+1$ for $k\geq r$ and
$\ell>\frac{{r\choose2}\lceil\frac{k}{r}\rceil^{2}}{\lfloor\frac{k}{r}\rfloor}$;
$rx_{k,\ell}(K_{r\times n})=k$ or $k+1$ for $k\geq r$ and
$\ell\leq\frac{{r\choose2}\lceil\frac{k}{r}\rceil^{2}}{\lfloor\frac{k}{r}\rfloor}$.
At the end of this paper, we totally determine the
$(3,\ell)$-rainbow index of $K_{r\times n}$ for sufficiently large
$n$. Note that all these results can be expanded to more general
complete bipartite graphs $K_{m,n}$ with $m=O(n^{\alpha})$ and
complete multipartite graphs $K_{n_{1},n_{2},\cdots,n_{r}}$ with
$n_{1}\leq n_{2}\leq\cdots\leq n_{r}$ and $n_{r}=O(n_{1}^{\alpha})$,
where $\alpha\in \mathbb{R}$ and $\alpha\geq 1$.

\section{The $(k,\ell)$-rainbow index for complete bipartite\\ graphs}

This section is devoted to the $(k,\ell)$-rainbow index for complete
bipartite graphs. We start with a lemma about the regular complete
bipartite graphs $K_{n,n}$.
\begin{lemma}\label{lem1}
For every pair of positive integers $k,\ell$ with $k\geq3$, there
exists a positive integer $N=N(k,\ell)$, such that
$rx_{k,\ell}(K_{n,n})\leq k+1$ for every integer $n \geq N$.
\end{lemma}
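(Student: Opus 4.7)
The plan is to use the probabilistic method. Color each edge of $K_{n,n}$ independently and uniformly with one of the $k+1$ colors. Writing the bipartition as $V_1\cup V_2$, for each $k$-subset $S$ with $a=|S\cap V_1|$ and $b=|S\cap V_2|$ ($a+b=k$), I would exhibit a family of $\Omega(n)$ candidate $S$-trees that are pairwise internally disjoint and each rainbow with constant probability, then combine concentration with a union bound over $S$.

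The candidate trees come in two flavors. If $ab=0$, say $S\subseteq V_2$, then for each $w\in V_1\setminus S$ the star $T_w$ with center $w$ and leaves $S$ is an $S$-tree with $k$ edges; the $n-k$ such stars are pairwise edge-disjoint (and share no vertex outside $S$), and each is rainbow with probability $p_1=(k+1)!/(k+1)^k$. If $a,b\ge 1$, fix any matching $M\subseteq(V_1\setminus S)\times(V_2\setminus S)$ of size $n-k$ and, for each $(w,w')\in M$, take the double star $T_{w,w'}$ whose edges are $\{ww'\}\cup\{wy:y\in S\cap V_2\}\cup\{w'x:x\in S\cap V_1\}$. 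Each $T_{w,w'}$ is an $S$-tree with exactly $k+1$ edges; because $M$ is a matching the trees are pairwise internally disjoint, and each is rainbow with probability $p_2=(k+1)!/(k+1)^{k+1}$.

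In each case the candidates are pairwise edge-disjoint, so under the random coloring the rainbow events are mutually independent and the number of rainbow candidates stochastically dominates $\mathrm{Bin}(n-k,p)$ for the relevant $p=p(k)>0$. Consequently the probability that fewer than $\ell$ of the candidates are rainbow decays exponentially in $n$ (Chernoff), which beats the polynomial factor $\binom{2n}{k}=O(n^k)$ arising from the union bound over all $k$-subsets $S$. Hence for every $n\ge N(k,\ell)$ a random coloring has positive probability of making every $S$ admit $\ell$ internally disjoint rainbow $S$-trees, so such a coloring exists and $rx_{k,\ell}(K_{n,n})\le k+1$.

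The main obstacle is choosing candidates that are simultaneously (i) small enough to be rainbow with constant probability under only $k+1$ colors, (ii) abundant, with $\Omega(n)$ copies, and (iii) pairwise edge-disjoint so that the rainbow events decouple. The double-star in the mixed case uses exactly $k+1$ edges and therefore only just fits within the color budget; replacing it by a spanning tree inside $S$ (which would have only $k-1$ edges) would give a higher rainbow probability per tree but fail internal disjointness, since $K_{a,b}$ simply does not contain enough edge-disjoint Steiner trees when $\min(a,b)$ is small. Striking this balance between tree size, abundance, and edge-disjointness is the crux of the argument.
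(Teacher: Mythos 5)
Your proposal is correct and follows essentially the same route as the paper: a uniform random $(k+1)$-coloring, stars centered outside $S$ when $S$ lies in one part, double stars through a matching of $(V_1\setminus S)\times(V_2\setminus S)$ when $S$ meets both parts, and a binomial tail estimate beaten by the $O(n^k)$ union bound. The only cosmetic difference is that the paper bounds the lower tail directly by $\binom{n}{\ell-1}(1-p)^{n-\ell+1}$ rather than invoking Chernoff, and in the one-sided case it uses all $n$ vertices of the opposite part as star centers (your count of $n-k$ is a harmless undercount).
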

\begin{proof}
Let $C=\{1,2,\cdots,k+1\}$ be a set of $k+1$ different colors. We
color the edges of $K_{n,n}$ with the colors from $C$ randomly and
independently. For $S\subseteq V(K_{n,n})$ with $|S|=k$, define
$A(S)$ as the event that there exist at least $\ell$ internally
disjoint rainbow $S$-trees. If Pr[ $\bigcap\limits_{S}A(S)$ ]$>0$,
then there exists a required $(k+1)$-edge-coloring, which implies
$rx_{k,\ell}(K_{n,n})\leq k+1$.

Assume that $K_{n,n}=G[U,V]$, where
$U={\{u_{1},u_{2},\cdots,u_{n}\}}$ and
$V={\{v_{1},v_{2},\cdots,v_{n}\}}$. We distinguish the following
three cases.

\emph{Case 1: $S\subseteq U$.}

Without loss of generality, we suppose $S=\{u_{1}, u_{2}, \cdots,
u_{k}\}$. For any vertex $v_{i}\in V$, let $T(v_{i})$ denote the
star with $v_{i}$ as its center and $E(T(v_{i}))=\{u_{1}v_{i},
u_{2}v_{i},\cdots, u_{k}v_{i}\}$. Clearly, $T(v_{i})$ is an
$S$-tree. Moreover, for $v_{i}, v_{j} \in V$ and $v_{i}\neq v_{j}$,
$T(v_{i})$ and $T(v_{j})$ are two internally disjoint $S$-trees. Let
$\mathcal{T}_{1}=\{T(v_{i})|v_{i} \in V, 1\leq i\leq n\}$. Then
$\mathcal{T}_{1}$ is a set of $n$ internally disjoint $S$-trees. It
is easy to see that $p_{1}$:= Pr[$T\in \mathcal{T}_{1}$ is a rainbow
$S$-tree]$=(k+1)!/(k+1)^{k}$. Define $X_{1}$ as the number of
rainbow S-trees in $\mathcal{T}_{1}$. Then we have
\begin{center}
$Pr[\overline{A(S)}]\leq Pr[X_{1}\leq\ell-1]\leq {n \choose \ell-1}(1-p_{1})^{n-\ell+1}\leq n^{\ell-1}(1-p_{1})^{n-\ell+1}$
\end{center}

\emph{Case 2: $S\subseteq V$.}

Similar to $Case\ 1$, we get $Pr[\overline{A(S)}]\leq
n^{\ell-1}(1-p_{1})^{n-\ell+1}.$

\emph{Case 3: $S\cap U \neq\emptyset$, $S\cap V \neq\emptyset$.}

Assume that $U^{\prime}=S\cap
U=\{u_{x_{1}},u_{x_{2}},\cdots,u_{x_{a}}\}$ and $V^{\prime}=S\cap
V=\{v_{y_{1}},v_{y_{2}},\cdots,v_{y_{b}}\}$, where $x_{i},y_{i}\in
\{1,2,\cdots, n\}$, $a\geq1$, $b\geq1$ and $a+b=k$. For every pair
$\{u_{i},v_{i}\}$ of vertices with $u_{i}\in U\setminus U^{\prime}$
and $v_{i}\in V\setminus V^{\prime}$, let $T(u_{i}v_{i})$ denote the
S-tree, where $V(T(u_{i}v_{i}))=S \cup \{u_{i},v_{i}\}$ and
$E(T(u_{i}v_{i}))=\{u_{i}v_{i},u_{i}v_{y_{1}},u_{i}v_{y_{2}},
\cdots, u_{i}v_{y_{b}}, v_{i}u_{x_{1}},v_{i}u_{x_{2}}, \cdots,
v_{i}u_{x_{a}}\}$. Clearly, for $i\neq j$, $T(u_{i}v_{i})$ and
$T(u_{j}v_{j})$ are two internally disjoint $S$-trees. Let
$\mathcal{T}_{2}=\{T(u_{i}v_{i})|u_{i}\in U\setminus U^{\prime},
v_{i}\in V\setminus V^{\prime}\}$. Then $\mathcal{T}_{2}$ is a set
of $n-d$ $(max\{a,b\}\leq d \leq k)$ internally disjoint $S$-trees.
It is easy to see that $p_{2}:=$Pr[T$\in \mathcal{T}_{2}$ is a
rainbow tree]=$(k+1)!/(k+1)^{k+1}$. Define $X_{2}$ as the number of
rainbow S-trees in $\mathcal{T}_{2}$. Then we have
\begin{center}
$Pr[\overline{A(S)}]\leq Pr[X_{2}\leq\ell-1]\leq {n-d \choose
\ell-1} (1-p_{2})^{n-d-\ell+1}\leq
n^{\ell-1}(1-p_{2})^{n-k-\ell+1}$.
\end{center}

Comparing the above three cases, we get $Pr[\overline{A(S)}]\leq
n^{\ell-1}(1-p_{2})^{n-k-\ell+1}$ for every set $S$ of $k$ vertices
in $K_{n,n}$. From the union bound, it follows that
\begin{eqnarray*}
  Pr[\ \bigcap\limits_{S}A(S)\ ] &=& 1-Pr[\ \bigcup\limits_{S}\overline{A(S)}\ ] \\
   &\geq & 1- \sum\limits_{S}Pr[\overline{A(S)}] \\
   &\geq & 1- {2n \choose k}n^{\ell-1}(1-p_{2})^{n-k-\ell+1} \\
   &\geq & 1-2^{k}n^{k+\ell-1}(1-p_{2})^{n-k-\ell+1}
\end{eqnarray*}
Since $\lim\limits_{n\rightarrow
\infty}1-2^{k}n^{k+\ell-1}(1-p_{2})^{n-k-\ell+1}=1$, there exists a
positive integer $N=N(k,\ell)$ such that $Pr[\
\bigcap\limits_{S}A(S)\ ]>0$ for all integers $n\geq N$, and thus
$rx_{k,\ell}(K_{n,n})\leq k+1$.
\end{proof}

We proceed with the definition of bipartite Ramsey number, which is
used to derive a lower bound for $rx_{k,\ell}(K_{n,n})$. Classical
Ramsey number involves coloring the edges of a complete graph to
avoid monochromatic cliques, while bipartite Ramsey number involves
coloring the edges of a complete bipartite graph to avoid
monochromatic bicliques. In \cite{Hattingh}, Hattingh and Henning
defined the {\it bipartite Ramsey number} $b(t,s)$ as the least
positive integer $n$ such that in any red-blue coloring of the edges
of $K(n, n)$, there exists a red $K(t,t)$ (that is, a copy of
$K(t,t)$ with all edges red) or a blue $K(s,s)$. More generally, one
may define the bipartite Ramsey number $b(t_{1},t_{2}, \cdots
,t_{k})$ as the least positive integer $n$ such that any coloring of
the edges of $K(n, n)$ with $k$ colors will result in a copy of
$K(t_{i},t_{i})$ in the $i$th color for some $i$. If $t_{i} = t$ for
all $i$, we denote the number by $b_{k}(t)$. The existence of all
such numbers follows from a result first proved by Erd\H{o}s and
Rado \cite{erdos}, and later by Chv$\acute{a}$tal \cite{Chvatal}.
The known bounds for $b(t,t)$ are
$(1+o(1))\frac{t}{e}(\sqrt{2})^{t+1}\leq b(t,t) \leq
(1+o(1))2^{t+1}\log t$, where the $\log$ is taken to the base 2. The
proof of the lower bound \cite{Hattingh} is an application of the
\emph{Lov$\acute{a}$sz Local Lemma}, while the upper bound
\cite{Conlon} is proved upon the observation that, in order for a
two-colored bipartite graph $K_{m,n}$ to necessarily contain a
monochromatic $K_{k,k}$, it is only necessary that one of $m$ and
$n$ be very large. With similar arguments, we can obtain the bounds
for $b_{k}(t)$ as $(1+o(1))\frac{t}{e}(\sqrt{k})^{t+1}\leq b_{k}(t)
\leq (1+o(1))k^{t+1}\log_{k}t$.

\begin{lemma}\label{lem2}
For every pair of positive integers $k,\ell$ with $k\geq4$, if $n
\geq b_{k}(k)$, then $rx_{k,\ell}(K_{n,n})\geq k+1$.
\end{lemma}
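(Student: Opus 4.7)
The plan is to reduce to $\ell=1$ by monotonicity ($rx_{k,\ell}$ is non-decreasing in $\ell$, since having $\ell$ internally disjoint rainbow $S$-trees certainly implies having one), so it suffices to exhibit, for every $k$-edge coloring of $K_{n,n}$, some $k$-set $S$ admitting no rainbow $S$-tree at all. The hypothesis $n\geq b_k(k)$ will enter only through one application of bipartite Ramsey.

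Given such a $k$-edge coloring, apply the definition of $b_k(k)$ to obtain a monochromatic $K_{k,k}$ with parts $A\subseteq U$ and $B\subseteq V$, both of size $k$, and WLOG all edges between them colored $1$. The crucial choice is to take $S=A'\cup B'$ where $A'\subset A$ and $B'\subset B$ with $|A'|=2$ and $|B'|=k-2$. This is legitimate precisely because $k\geq 4$, so that we can place at least two vertices of $S$ on each side of the bipartition.

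For the core argument, let $T$ be any rainbow $S$-tree. Since only $k$ colors are available, $|E(T)|\leq k$, hence $|V(T)|\leq k+1$. As $T\supseteq S$ and $|S|=k$, the tree has exactly $k$ or $k+1$ vertices, corresponding to zero or one Steiner vertex. I would then bound from below the number of edges of $T$ lying in $A'\times B'$, all of which have color $1$: with no Steiner vertex, $T$ is a spanning tree of the bipartite graph on $(A',B')$ with $k-1\geq 3$ color-$1$ edges; with Steiner vertex $w\in U$, at most $|B'|=k-2$ edges of $T$ can touch $w$, forcing at least $k-(k-2)=2$ edges of $T$ into $A'\times B'$; symmetrically for Steiner vertex $w\in V$, at most $|A'|=2$ edges touch $w$, so at least $k-2\geq 2$ edges lie in $A'\times B'$. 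In every case $T$ contains at least two edges of color $1$, contradicting rainbowness.

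This yields $rx_{k,1}(K_{n,n})\geq k+1$ and hence $rx_{k,\ell}(K_{n,n})\geq k+1$ for every $\ell$. The essential idea---and the only place where the hypothesis $k\geq 4$ is used---is to split $S$ so that both $|S\cap U|$ and $|S\cap V|$ are at least $2$; this forces any tree spanning $S$ with at most $k$ edges to retain at least two color-$1$ edges inside the Ramsey biclique, no matter where a single Steiner vertex might be placed. The main ``obstacle'' is therefore not a difficult calculation but recognizing the right shape of $S$: splits like $(k-1,1)$ still allow a star at the lone $V$-vertex to absorb all but one color-$1$ edge, which is exactly why $k=3$ must be treated separately later in the paper.
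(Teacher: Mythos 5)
Your proposal is correct and follows essentially the same route as the paper: invoke $b_k(k)$ to extract a monochromatic $K_{k,k}$, choose $S$ with two vertices on one side and $k-2\ (\geq 2)$ on the other, and observe that a rainbow tree on $k$ colors has at most one Steiner vertex and hence must retain at least two (same-colored) edges inside the biclique. Your explicit degree count on the Steiner vertex is just a fleshed-out version of the step the paper dismisses with ``it is easy to see.''
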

\begin{proof}
By contradiction. Suppose $\textbf{\textit{c}}$ is a
$k$-edge-coloring of $K_{n,n}$ such that for any set $S$ of $k$
vertices in $K_{n,n}$, there exist $\ell$ internally disjoint
rainbow $S$-trees. From the definition of bipartite Ramsey number,
we know that if $n \geq b_{k}(k)$, then in this $k$-edge-coloring
$\textbf{\textit{c}}$, one will find a monochromatic subgraph
$K_{k,k}$. Let $K_{n,n}=G[U,V]$ and $U^{\prime}$, $V^{\prime}$ be
the bipartition of the monochromatic $K_{k,k}$, where
$U^{\prime}\subset U$, $V^{\prime}\subset V$ and
$|U^{\prime}|=|V^{\prime}|=k$. Now take $S$ as follows: two vertices
are from $V^{\prime}$ and the other $k-2 \ (\geq2)$ vertices are
from $U^{\prime}$. Assume that $T$ is one of the $\ell$ internally
disjoint rainbow $S$-trees. Since there are $k$ different colors,
the rainbow tree $T$ contains at most $k+1$ vertices (i.e., at most
one vertex in $V\setminus S$). It is easy to see that $T$ is in the
possession of at least two edges from the subgraph induced by $S$,
which share the same color. It contradicts the fact that $T$ is a
rainbow tree. Thus $rx_{k,\ell}(K_{n,n})\geq k+1$ when $k\geq4$ and
$n\geq b_{k}(k)$.
\end{proof}
From Lemmas \ref{lem1} and \ref{lem2}, we get that if $k\geq4$,
$rx_{k,\ell}(K_{n,n})=k+1$ for  sufficiently large $n$. What remains
to deal with is the $(3,\ell)$-rainbow index of $K_{n,n}$.

\begin{lemma}\label{lem3}
For every integer $\ell\geq 1$, there exists a positive integer $N=N(\ell)$ such that
\begin{displaymath}
rx_{3,\ell}(K_{n,n}) = \left\{ \begin{array}{ll}
3 & \textrm{if $\ell=1,2 $},\\
4 & \textrm{if $\ell\geq3$\ \ \ \ }
\end{array} \right.
\end{displaymath}
for every integer $n\geq N$.
\end{lemma}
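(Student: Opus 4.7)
My plan is to reduce the claim, by monotonicity of $rx_{3,\ell}$ in $\ell$, to the four inequalities $rx_{3,1}(K_{n,n}) \geq 3$, $rx_{3,3}(K_{n,n}) \geq 4$, $rx_{3,2}(K_{n,n}) \leq 3$, and $rx_{3,\ell}(K_{n,n}) \leq 4$ for every $\ell$. The last is immediate from Lemma \ref{lem1} with $k=3$. The bound $rx_{3,1}(K_{n,n}) \geq 3$ is also immediate: any tree connecting a triple $S \subseteq U$ must pass through $V$, hence has at least three edges, and so requires at least three colors to be rainbow.

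For $rx_{3,3}(K_{n,n}) \geq 4$ I argue by contradiction using a mixed triple $S = \{u_1, u_2, v_1\}$. Since a rainbow tree has at most one edge per color, every rainbow $S$-tree has at most three edges and spans at most four vertices. A case analysis on the extra vertex (no extra vertex, giving the $2$-edge path $u_1 v_1 u_2$; some $u_3 \in U$, forcing the star at $v_1$; or some $v_2 \in V$, forcing a spanning path of the $C_4$ on $\{u_1, u_2, v_1, v_2\}$) shows that every rainbow $S$-tree uses at least one edge of the pair $\{u_1 v_1, u_2 v_1\}$. Three edge-disjoint such trees would therefore account for at least three edges drawn from a set of size only two, a contradiction.

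For $rx_{3,2}(K_{n,n}) \leq 3$ I apply the probabilistic method: color the edges of $K_{n,n}$ uniformly and independently from $\{1,2,3\}$ and show that the failure probability at each triple is $o(n^{-3})$, so a union bound over the $\binom{2n}{3}$ triples gives positive overall success probability. When $S$ lies in one part, the $n$ stars $T(v_i)$ of Lemma \ref{lem1} are pairwise internally disjoint and each is rainbow with probability $2/9$, and a binomial tail bound settles this case. The mixed triple $S = \{u_1, u_2, v_1\}$ is the crux: the analysis from the previous paragraph forces any two internally disjoint rainbow $S$-trees to take the form $T_1(v) = \{u_1 v_1, u_1 v, u_2 v\}$ and $T_2(v') = \{u_2 v_1, u_1 v', u_2 v'\}$ for some distinct $v, v' \in V \setminus \{v_1\}$. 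Conditioning on the colors $c(u_1 v_1)$ and $c(u_2 v_1)$ renders the $T_1(v)$-events, and the $T_2(v')$-events, independent across the choice of $v$, each with conditional probability $2/9$; a short case split on whether $c(u_1 v_1) = c(u_2 v_1)$ determines whether the sets of good $v$ for the two tree-families are disjoint or coincide, and in either case a binomial tail yields failure probability $O(n (7/9)^n)$ for this $S$.

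The main obstacle is the mixed case. The severe restriction to only three colors forces rainbow $S$-trees into the rigid $T_1/T_2$ structure, and the correlation between the relevant events through the two shared edges incident to $v_1$ must be untangled by conditioning on those two colors. Once this is done, the remaining work is a routine combination of binomial tail bounds and a union bound over $O(n^3)$ triples.
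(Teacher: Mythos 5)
Your proposal is correct and follows essentially the same route as the paper: the lower bounds come from the size and rigid structure of $S$-trees for a triple inside one part and for a mixed triple (where every rainbow $S$-tree must use one of the two edges joining the lone vertex to the other two, so at most two internally disjoint ones exist), the upper bound for $\ell\geq 3$ is Lemma \ref{lem1}, and $rx_{3,2}\leq 3$ is proved by a random $3$-coloring with a union bound, conditioning in the mixed case on whether $c(u_1v_1)=c(u_2v_1)$ exactly as the paper does. Your identification of the $T_1(v)/T_2(v')$ structure is a slightly more explicit packaging of the paper's Subcases 2.1 and 2.2, but the computation and the resulting $O(n(7/9)^n)$ bound are the same.
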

\begin{proof}
Assume that $K_{n,n}=G[U,V]$ and $|U|=|V|=n$.

\emph{\textbf{Claim $1$:}} If $\ell\geq1$,
$rx_{3,\ell}(K_{n,n})\geq3$; furthermore, if $\ell\geq3$,
$rx_{3,\ell}(K_{n,n})\geq4$.

\indent If $S=\{x,y,z\}\subseteq U$, then the size of $S$-trees is
at least three, which implies that $rx_{3,\ell}(K_{n,n})\geq3$ for
all integers $\ell\geq1$. If $S=\{x,y,z\}, \{x, y\}\subseteq U, z\in
V$, then the number of internally disjoint $S$-trees of size two or
three is no more than two. Thus we have $rx_{3,\ell}(K_{n,n})\geq4$
for all integers $\ell\geq3$.

Combing with $Lemma\ \ref{lem1}$, we get that if $\ell \geq3$, there
exists a positive integer $N=N(\ell)$ such that
$rx_{3,\ell}(K_{n,n})=4$ for every integer $n\geq N$.

\emph{\textbf{Claim $2$:}} If $\ell=1,2$, then there exists a
positive integer $N=N(\ell)$ such that $rx_{3,\ell}(K_{n,n})=3$ for
every integer $n\geq N$.

Note that $3\leq rx_{3,1}(K_{n,n})\leq rx_{3,2}(K_{n,n})$. So it
suffices to prove $rx_{3,2}(K_{n,n})\leq 3$. In other words, we need
to find a $3$-edge-coloring $c:E(K_{n,n})\rightarrow \{1,2,3\}$ such
that for any $S\subseteq V(K_{n,n})$ with $|S|=3$, there are at
least two internally disjoint rainbow $S$-trees. We color the edges
of $K_{n,n}$ with colors $1,2,3$ randomly and independently. For
$S=\{x,y,z\}\subseteq V(K_{n,n})$, define $B(S)$ as the event that
there exist at least $two$ internally disjoint rainbow $S$-trees.
Similar to the proof in $Lemma$ \ref{lem1}, we only need to prove
$Pr[\ \overline{B(S)}\ ]=o(n^{-3})$, since then
Pr[$\bigcap\limits_{S}B(S)$ ]$>0$ and $rx_{3,2}(K_{n,n})\leq 3$ for
sufficiently large $n$. We distinguish the following two cases.

\emph{Case 1}: x, y, z are in the same vertex class.

Without loss of generality, assume that $\{x,y,z\} \subseteq U$. For
any vertex $v\in V$, let $T(v)$ denote the star with $v$ as its
center and $E(T(v))=\{xv, yv,zv\}$. Clearly,
$\mathcal{T}_{3}=\{T(v)|v \in V\}$ is a set of $n$ internally
disjoint $S$-trees and Pr[T$\in \mathcal{T}_{3}$ is a rainbow
tree]=$\frac{3\times2\times1}{3\times3\times3}=\frac{2}{9}$. Define
$X_{3}$ as the number of internally disjoint rainbow S-trees in
$\mathcal{T}_{3}$. Then we have
\begin{center}
$Pr[\overline{B(S)}]=Pr[X_{3}\leq1]={n\choose
1}\frac{2}{9}(1-\frac{2}{9})^{n-1}+(1-\frac{2}{9})^{n}=o(n^{-3}).$
\end{center}

\emph{Case 2}: $x,y,z$ are in two vertex classes. Without loss of
generality, assume that $\{x,y\} \subseteq U$ and $z\in V$.

\emph{Subcase 2.1}: The edges $xz,yz$ share the same color. Without
loss of generality, we assume $c(xz)=c(yz)=1$. For any vertex $v\in
V\setminus \{z\}$, if $\{c(xv), c(yv)\}=\{2,3\}$, then $\{xz, xv,
yv\}$ or $\{yz, xv, yv\}$ induces a rainbow $S$-tree. So $Pr[\{xz,
xv, yv\}$ induces a rainbow $S$-tree$]=Pr[\{yz, xv, yv\}$ induces a
rainbow $S$-tree$]=\frac{2}{9}$. If there do not exist two
internally disjoint rainbow $S$-trees, then we can find at most one
vertex $v\in V\setminus \{z\}$ satisfying $\{c(xv),
c(yv)\}=\{2,3\}$. Thus
\begin{eqnarray*}
Pr[\ \overline{B(S)}|c(xz)=c(yz)\ ]&=&{n-1 \choose 1}\cdot\frac{2}{9}\cdot(1-\frac{2}{9})^{n-2}+(1-\frac{2}{9})^{n-1}\\
&=&\frac{2n+5}{9}(\frac{7}{9})^{n-2}.
\end{eqnarray*}

\emph{Subcase 2.2}: The edges $xz,yz$ have distinct colors. Without
loss of generality, we assume $c(xz)=1, c(yz)=2$. For any vertex
$v\in V\setminus \{z\}$, if $\{c(xv), c(yv)\}=\{2,3\}$, then $\{xz,
xv, yv\}$ induces a rainbow $S$-tree, and so $Pr[\{xz, xv, yv\}$
induces a rainbow $S$-tree$]=\frac{2}{9}$. Moreover, if $\{c(xv),
c(yv)\}=\{1,3\}$, then $\{yz, xv, yv\}$ induces a rainbow $S$-tree,
and so $Pr[\{yz, xv, yv\}$ induces a rainbow
$S$-tree$]=\frac{2}{9}$. If there do not exist two internally
disjoint rainbow $S$-trees, then we can not find two vertices
$v,v^{\prime}\in V\setminus \{z\}$ satisfying $\{c(xv),
c(yv)\}=\{2,3\}$ and $\{c(xv^{\prime}), c(yv^{\prime})\}=\{1,3\}$.
Thus
\begin{eqnarray*}
Pr[\ \overline{B(S)}|c(xz)\neq c(yz)\ ]&=&(1-\frac{2}{9}-\frac{2}{9})^{n-1}+2\sum_{i=1}^{n-1}{n-1\choose i}(\frac{2}{9})^{i}(1-\frac{2}{9}-\frac{2}{9})^{n-1-i}\\
&=&2(\frac{7}{9})^{n-1}-(\frac{5}{9})^{n-1}.
\end{eqnarray*}
From the law of total probability, we have
\begin{eqnarray*}
Pr[\ \overline{B(S)}\ ]&=&Pr[\ c(xz)= c(yz)\ ]\cdot Pr[\ \overline{B(S)}|c(xz)= c(yz)\ ]\\
&\ \ &+Pr[\ c(xz)\neq c(yz)\ ]\cdot Pr[\ \overline{B(S)}|c(xz)\neq c(yz)\ ]\\
&=&\frac{1}{3}\cdot\frac{2n+5}{9}(\frac{7}{9})^{n-2}
+\frac{2}{3}\cdot[2(\frac{7}{9})^{n-1}-(\frac{5}{9})^{n-1}]\\
&\leq& 2n(\frac{7}{9})^{n-2}=o(n^{-3}).
\end{eqnarray*}
Thus, there exists a positive integer $N=N(\ell)$ such that
$rx_{3,2}(K_{n,n})\leq3$, and then
$rx_{3,1}(K_{n,n})=rx_{3,2}(K_{n,n})=3$ for all integers $n\geq N$.
The proof is thus complete.
\end{proof}

By Lemmas \ref{lem1}, \ref{lem2} and \ref{lem3}, we come to the
following conclusion.

\begin{theorem}\label{1}
For every pair of positive integers  $k,\ell$ with $k\geq3$, there
exists a positive integer $N=N(k,\ell)$, such that
\begin{displaymath}
rx_{k,\ell}(K_{n,n}) = \left\{ \begin{array}{ll}
3 & \textrm{if $k=3, \ell=1,2$}\\
4 & \textrm{if $k=3, \ell\geq3$}\\
k+1 & \textrm{if $k \geq4 $\ \ \ \ }
\end{array} \right.
\end{displaymath}
 for every integer $n \geq N$.
\end{theorem}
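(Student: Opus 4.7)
The plan is simply to assemble the three cases by invoking the lemmas already proved. The theorem has a three-branch case split that aligns exactly with what has been established: one branch for $k\geq 4$, and two branches for $k=3$ according to the size of $\ell$, so the proof amounts to choosing a single threshold $N(k,\ell)$ that works in whichever branch applies.

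For the case $k\geq 4$, I would combine Lemma \ref{lem1} and Lemma \ref{lem2}. Lemma \ref{lem1} supplies a threshold $N_1(k,\ell)$ beyond which $rx_{k,\ell}(K_{n,n})\leq k+1$, while Lemma \ref{lem2} shows $rx_{k,\ell}(K_{n,n})\geq k+1$ as soon as $n\geq b_k(k)$. Setting $N(k,\ell):=\max\{N_1(k,\ell),\,b_k(k)\}$ gives equality for every $n\geq N(k,\ell)$. The existence of $b_k(k)$ is guaranteed by the Erd\H{o}s--Rado/Chv\'atal bound recalled before Lemma \ref{lem2}, so no extra argument is needed here.

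For the case $k=3$, the Ramsey-type lower bound of Lemma \ref{lem2} breaks down (the subset $S$ constructed there needs $k-2\geq 2$ vertices from $U'$), so I would instead cite Lemma \ref{lem3} directly. That lemma already handles both subcases: it yields $rx_{3,\ell}(K_{n,n})=3$ for $\ell\in\{1,2\}$ and $rx_{3,\ell}(K_{n,n})=4$ for $\ell\geq 3$, each valid past some threshold $N(\ell)$, which we adopt as $N(3,\ell)$.

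There is no real obstacle here beyond bookkeeping; the only thing to be careful about is that the three values of $N$ arising from Lemmas \ref{lem1}, \ref{lem2} and \ref{lem3} are combined into a single $N(k,\ell)$ in each branch so that the statement holds uniformly for all $n\geq N(k,\ell)$. All substantive work (the probabilistic upper bound, the Ramsey-based lower bound, and the delicate $k=3$ analysis for $\ell\leq 2$) has already been carried out in the three lemmas.
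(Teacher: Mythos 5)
Your proposal is correct and matches the paper exactly: the paper derives Theorem \ref{1} simply by combining Lemmas \ref{lem1}, \ref{lem2} and \ref{lem3}, using Lemmas \ref{lem1} and \ref{lem2} for $k\geq 4$ and Lemma \ref{lem3} for $k=3$. Your additional remark about taking the maximum of the relevant thresholds is sound bookkeeping that the paper leaves implicit.
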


With similar arguments, we can expand this result to more general
complete bipartite graphs $K_{m,n}$, where $m=O(n^{\alpha})$ (i.e.,
$m\leq Cn^{\alpha}$ for some positive constant $C$), $\alpha\in
\mathbb{R}$ and  $\alpha\geq1$.

\begin{corollary}\label{2}
Let $m,n$ be two positive integers with $m=O(n^{\alpha})$,
$\alpha\in \mathbb{R}$ and $\alpha\geq1$. For every pair of positive
integers  $k,\ell$ with $k\geq3$, there exists a positive integer
$N=N(k,\ell)$, such that
\begin{displaymath}
rx_{k,\ell}(K_{m,n}) = \left\{ \begin{array}{ll}
3 & \textrm{if $k=3, \ell=1,2$}\\
4 & \textrm{if $k=3, \ell\geq3$}\\
k+1 & \textrm{if $k \geq4 $\ \ \ \ }
\end{array} \right.
\end{displaymath}
 for every integer $n \geq N$.
\end{corollary}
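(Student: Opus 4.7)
The strategy is to rerun the three-lemma structure that established Theorem~\ref{1}, namely Lemmas~\ref{lem1}--\ref{lem3}, bookkeeping the two part sizes $m$ and $n$ separately. Write $K_{m,n}=G[U,V]$ with $|U|=m\leq Cn^{\alpha}$ and $|V|=n$, where $n\geq N$ is large (with the implicit understanding that $m$ also tends to infinity together with $n$, since otherwise already $K_{1,n}$ would falsify the statement).

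For the upper bound $rx_{k,\ell}(K_{m,n})\leq k+1$ (and the sharper $\leq 3$ when $k=3$, $\ell\leq 2$), I would repeat the random-coloring arguments of Lemma~\ref{lem1} and Claim~$2$ of Lemma~\ref{lem3} verbatim. The three geometric cases (all of $S$ in $U$, all of $S$ in $V$, or $S$ split across both sides) still produce internally disjoint collections of candidate $S$-trees of sizes $n$, $m$, and $\min\{m,n\}-O(k)$, respectively. Each $Pr[\overline{A(S)}]$ therefore acquires a decay factor of the form $(1-p)^{\Omega(\min\{m,n\})}$, while the union bound contributes a factor $\binom{m+n}{k}\leq(m+n)^{k}=O(n^{\alpha k})$, which is still only polynomial in $n$. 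Since exponential decay in $\min\{m,n\}$ dominates this polynomial as $n\to\infty$, a good coloring exists.

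For the matching lower bounds, the earlier lemmas apply almost without change. When $k\geq 4$, Lemma~\ref{lem2} needs only $\min\{m,n\}\geq b_{k}(k)$ to extract a monochromatic $K_{k,k}$ from any $k$-edge-coloring of $K_{m,n}$; picking $S$ with two vertices on one side of this $K_{k,k}$ and $k-2$ on the other forces two equally colored edges into every rainbow $S$-tree, exactly as before. When $k=3$, $\ell\geq 3$, the argument of Claim~$1$ of Lemma~\ref{lem3} is purely local: any split triple $S$ admits at most two internally disjoint $S$-trees of size $\leq 3$, so three colors cannot suffice for $\ell\geq 3$ rainbow $S$-trees.

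The main --- and essentially only --- obstacle is verifying that Case~$3$ of the probabilistic argument still closes when $m\neq n$: it is $\min\{m,n\}$, not $n$, that sits in the exponent, so one needs $m$ to grow with $n$ for the decay factor to beat the union-bound term $O(n^{\alpha k})$. Granting this implicit growth hypothesis, the bound $m=O(n^{\alpha})$ does precisely what the argument requires --- keeping the number of $k$-subsets of $V(K_{m,n})$ polynomial in $n$ --- and no other part of the three-lemma machinery uses $|U|=|V|$. The remaining estimates are routine reruns of those in Lemmas~\ref{lem1}--\ref{lem3}.
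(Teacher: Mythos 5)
Your proposal is correct and is essentially the paper's own argument: the paper gives no separate proof of Corollary~\ref{2}, stating only that it follows ``with similar arguments'' from Lemmas~\ref{lem1}--\ref{lem3}, and your rerun --- tracking the union-bound factor $O(n^{\alpha k})$ against the decay $(1-p)^{\Omega(\min\{m,n\})}$, and checking that the bipartite-Ramsey and local lower bounds only need the smaller side to be large --- is exactly the intended bookkeeping. Your remark that the smaller part must also grow (i.e.\ the statement should be read with $n\leq m\leq Cn^{\alpha}$, so $K_{1,n}$ is excluded) is a fair and correctly resolved observation about an implicit hypothesis the paper leaves unstated.
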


\section{The $(k,\ell)$-rainbow index for complete multipartite graphs}

In this section, we focus on the $(k,\ell)$-rainbow index for
complete multipartite graphs. Let $K_{r\times n}$ denote the
complete multipartite graph with $r\geq 3$ vertex classes of the
same size $n$. We obtain the following results about
$rx_{k,\ell}(K_{r\times n})$:

\begin{theorem}\label{thm2}
For every triple of positive integers $k,\ell,r$ with $k\geq 3$ and
$r\geq 3$, there exists a positive integer $N=N(k,\ell,r)$ such that
\begin{displaymath}
rx_{k,\ell}(K_{r\times n}) = \left\{ \begin{array}{ll}
k & \textrm{if $k< r$}\\
k \ or \ k+1 & \textrm{if $k\geq r, \ell\leq\frac{{r\choose2}\lceil\frac{k}{r}\rceil^{2}}{\lfloor\frac{k}{r}\rfloor}$} \\
k+1 & \textrm{if $k\geq r, \ell>\frac{{r\choose2}\lceil\frac{k}{r}\rceil^{2}}{\lfloor\frac{k}{r}\rfloor}$}\ \ \ \
\end{array} \right.
\end{displaymath}
for every integer $n\geq N$.
\end{theorem}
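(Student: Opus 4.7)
The plan is to deduce Theorem~\ref{thm2} from four separate claims that together cover the three cases of the conclusion: (i) the generic lower bound $rx_{k,\ell}(K_{r\times n})\geq k$; (ii) the upper bound $rx_{k,\ell}(K_{r\times n})\leq k$ when $k<r$; (iii) the upper bound $rx_{k,\ell}(K_{r\times n})\leq k+1$ when $k\geq r$; and (iv) the matching lower bound $rx_{k,\ell}(K_{r\times n})\geq k+1$ whenever $\ell>M:=\binom{r}{2}\lceil k/r\rceil^{2}/\lfloor k/r\rfloor$.

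Claim (i) is immediate: for $n\geq k$, take $S$ inside a single class $V_{i}$. Since $S$ is independent in $K_{r\times n}$, every Steiner tree on $S$ must use at least one vertex outside $V_{i}$ and therefore contains at least $k$ edges, so a rainbow $S$-tree requires at least $k$ colors.

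For (ii) and (iii) I would adapt the probabilistic scheme of Lemma~\ref{lem1}. When $k<r$, every $k$-set $S$ misses some class $V_{j}$, so the $n$ stars centered at the vertices of $V_{j}$ form a pairwise internally disjoint family of $S$-trees; under a uniform random $k$-coloring each is rainbow with probability $p_{1}=k!/k^{k}$, and the bound $\Pr[\overline{A(S)}]\leq n^{\ell-1}(1-p_{1})^{n-\ell+1}$ summed over the $\binom{rn}{k}$ choices of $S$ delivers (ii). For (iii) I would recolor with $k+1$ colors: sets $S$ that miss some class are again handled by stars (now with success probability $(k+1)!/(k+1)^{k}$), while a set $S$ meeting every class admits a two-Steiner-vertex construction: fix distinct indices $j_{1},j_{2}$ and, for each pair $(v_{1},v_{2})$ taken from a matching between $V_{j_{1}}\setminus S$ and $V_{j_{2}}\setminus S$, use the tree with edge set $\{v_{1}v_{2}\}\cup\{v_{1}u:u\in S\setminus V_{j_{1}}\}\cup\{v_{2}u:u\in S\cap V_{j_{1}}\}$. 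This supplies $\Theta(n)$ internally disjoint $(k+1)$-edge trees, each rainbow with probability $(k+1)!/(k+1)^{k+1}$, and the same union bound closes (iii).

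Claim (iv) is the main obstacle. Write $k=qr+s$ with $0\leq s<r$ and choose $S$ so that $s$ of the $r$ classes meet $S$ in $q+1=\lceil k/r\rceil$ vertices and the remaining $r-s$ classes meet $S$ in $q=\lfloor k/r\rfloor$ vertices. Assume for contradiction that some $k$-edge-coloring produces $\ell$ internally disjoint rainbow $S$-trees $T_{1},\ldots,T_{\ell}$. Being rainbow with only $k$ colors forces $|E(T_{i})|\leq k$, so each $T_{i}$ is either a spanning tree of $K[S]$ (using $k-1\geq q$ of its edges) or has a unique Steiner vertex $w_{i}\in V_{j_{i}}$ with $\deg_{T_{i}}(w_{i})\leq k-a_{j_{i}}$, and hence contributes at least $a_{j_{i}}\geq q$ edges of $K[S]$. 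Edge-disjointness then yields
\begin{equation*}
\ell\cdot q\;\leq\;\sum_{i=1}^{\ell}\bigl|E(T_{i})\cap E(K[S])\bigr|\;\leq\;|E(K[S])|=\sum_{i<j}a_{i}a_{j}\;\leq\;\binom{r}{2}(q+1)^{2},
\end{equation*}
so $\ell\leq M$, contradicting $\ell>M$. The delicate point is recognising that with only $k$ colors every rainbow $S$-tree is trapped into spending at least $q$ edges inside the bounded subgraph $K[S]$, whereas with $k+1$ colors the two-Steiner-vertex trees of (iii) circumvent this bottleneck; this is exactly why $M$ is the sharp threshold.
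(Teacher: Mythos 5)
Your proposal is correct and follows essentially the same route as the paper: the trivial lower bound from an $S$ inside one class, the probabilistic star/two-Steiner-vertex constructions for the upper bounds $k$ (when $k<r$) and $k+1$, and the counting argument that every rainbow $S$-tree under a $k$-coloring must spend at least $\lfloor k/r\rfloor$ edge-disjoint edges inside the induced subgraph on the balanced set $S$, forcing $\ell\leq\binom{r}{2}\lceil k/r\rceil^{2}/\lfloor k/r\rfloor$. The only difference is cosmetic (your two-Steiner-vertex tree attaches the classes to $v_{1}$ and $v_{2}$ in the mirror-image way), so nothing further is needed.
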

\begin{proof}
Assume that $K_{r\times n}=G[V_{1},V_{2},\ldots,V_{r}]$ and
$V_{i}=\{v_{i1},v_{i2},\cdots,v_{in}\}$, $1\leq i\leq r$. For
$S\subseteq V(K_{r\times n})$ with $|S|=k$, define $C(S)$ as the
event that there exist at least $\ell$ internally disjoint rainbow
$S$-trees.

\emph{\textbf{Claim $1$:}} $rx_{k,\ell}(K_{r\times n})\geq k$;
furthermore, if $k\geq r$,
$\ell>\frac{{r\choose2}\lceil\frac{k}{r}\rceil^{2}}{\lfloor\frac{k}{r}\rfloor}
$, then $rx_{k,\ell}(K_{r\times n})\geq k+1$.

\indent Let $S\subseteq V_{1}$. Then the size of $S$-trees is at
least $k$, which implies that $rx_{k,\ell}(K_{r\times n})\geq k$ for
all integers $\ell\geq1$. If $k\geq r$, we can take a set
$S^{\prime}$ of $k$ vertices such that $|S^{\prime}\cap
V_{i}|=\lfloor\frac{k}{r}\rfloor$ or $\lceil\frac{k}{r}\rceil$ for
$1\leq  i\leq r$. Let $H$ denote the subgraph induced by
$S^{\prime}$. We know $|E(H)|\leq
{r\choose2}\lceil\frac{k}{r}\rceil^{2}$. Let
$\mathcal{T}=\{T_{1},T_{2},\cdots, T_{c}\}$ be the set of internally
disjoint $S^{\prime}$-trees with $k$ or $k-1$ edges. Clearly, if
$T\in \mathcal{T}$ is an $S^{\prime}$-tree with $k-1$ edges, then
$|E(T)\cap E(H)|=k-1$; if $T\in \mathcal{T}$ is an $S^{\prime}$-tree
with $k$ edges, then $|E(T)\cap E(H)|\geq
\lfloor\frac{k}{r}\rfloor$. Therefore,
$c\leq\frac{{r\choose2}\lceil\frac{k}{r}\rceil^{2}}{\lfloor\frac{k}{r}\rfloor}$
(note that the bound is sharp for $r=3$, $k=3$). Thus we have
$rx_{k,\ell}(K_{r\times n})\geq k+1$ for $k\geq r$ and
$\ell>\frac{{r\choose2}\lceil\frac{k}{r}\rceil^{2}}{\lfloor\frac{k}{r}\rfloor}$.

\emph{\textbf{Claim $2$:}} If $k<r$, then there exists an integer
$N=N(k,\ell,r)$ such that $rx_{k,\ell}(K_{r\times n})\leq k$ for
every integer $n\geq N$.

We color the edges of $K_{r\times n}$ with $k$ colors randomly and
independently. Since $k<r$, no matter how $S$ is taken, we can
always find a set $V_{i}$ satisfying $V_{i}\cap S=\emptyset$. For
any $v_{ij}\in V_{i}$, let $T(v_{ij})$ denote the star with $v_{ij}$
as its center and $E(T(v_{ij}))=\{v_{ij}s|s\in S\}$. Clearly,
$\mathcal{T}_{4}=\{T(v_{ij})|v_{ij} \in V_{i}\}$ is a set of $n$
internally disjoint $S$-trees. Similar to the proof of Case 1 in
Lemma \ref{lem1}, we get $Pr[\overline{C(S)}]=o(n^{-k})$. So there
exists a positive integer $N=N(k,\ell,r)$ such that $Pr[\
\bigcap\limits_{S}C(S)\ ]>0$, and thus $rx_{k,\ell}(K_{n,n})\leq k$
for all integers $n\geq N$.

It follows from $Claims\ 1$ and $2$ that if $k< r$, there exists a
positive integer $N=N(k,\ell,r)$ such that $rx_{k,\ell}(K_{r\times
n})= k$ for every integer $n\geq N$.

\emph{\textbf{Claim $3$:}} If $k\geq \ell$, then there exists a
positive integer $N=N(k,\ell,r)$ such that $rx_{k,\ell}(K_{r\times
n})\leq k+1$ for every integer $n\geq N$.

Color the edges of $K_{r\times n}$ with $k+1$ colors randomly and
independently. We distinguish the following two cases.

\emph{Case 1}: $S\cap V_{i} =\emptyset$ for some $i$.

We follow the notation $T(v_{ij})$ and $\mathcal{T}_{4}$ in $Claim\
2$. Similarly, $\mathcal{T}_{4}$ is a set of $n$ internally disjoint
$S$-trees and $Pr[\overline{C(S)}]=o(n^{-k})$.

\emph{Case 2}: $S\cap V_{i}\neq\emptyset$ for $1\leq i\leq r$.

We pick up two vertex classes $V_{1}, V_{2}$. Suppose
$V_{1}^{\prime}=S\cap
V_{1}=\{v_{1x_{1}},v_{1x_{2}},\cdots,v_{1x_{a}}\}$ and
$V_{2}^{\prime}=S\cap
V_{2}=\{v_{2y_{1}},v_{2y_{2}},\cdots,v_{2y_{b}}\}$, where
$x_{i},y_{i}\in \{1,2,\cdots, n\}$, $a\geq1$, $b\geq 1$. Note that
$a+b\leq k-r+2$. For every pair $\{v_{1i},v_{2i}\}$ of vertices with
$v_{1i}\in V_{1}\setminus V_{1}^{\prime}$ and $v_{2i}\in
V_{2}\setminus V_{2}^{\prime}$, let $T(v_{1i}v_{2i})$ denote the
S-tree, where $V(T(v_{1i}v_{2i}))=S \cup \{v_{1i},v_{2i}\}$ and
$E(T(v_{1i}v_{2i}))=\{v_{1i}v_{2i}\}\cup\{v_{1i}v|v\in
V_{2}^{\prime}\}\cup\{v_{2i}v|v\in S\backslash V_{2}^{\prime} \}$.
Clearly, for $i\neq j$, $T(v_{1i}v_{2i})$ and $T(v_{1j}v_{2j})$ are
two internally disjoint $S$-trees. Let
$\mathcal{T}_{5}=\{T(v_{1i}v_{2i})|v_{1i}\in V_{1}\setminus
V_{1}^{\prime}, v_{2i}\in V_{2}\setminus V_{2}^{\prime}\}$. Then
$\mathcal{T}_{5}$ is a set of $n-d$ $(max\{a,b\}\leq d \leq a+b \leq
k-r+2)$ internally disjoint $S$-trees. Similar to the proof of Case
3 in Lemma \ref{lem1}, we get $Pr[\overline{C(S)}]=o(n^{-k})$.

Therefore we conclude that there exists a positive integer
$N=N(k,\ell,r)$ such that\\ $Pr[\ \bigcap\limits_{S}C(S)\ ]>0$, and
thus $rx_{k,\ell}(K_{r\times n})\leq k+1$ for all integers $n\geq
N$.

It follows from $Claims$ $1$ and $3$ that if $k\geq r,
\ell>\frac{{r\choose2}\lceil\frac{k}{r}\rceil^{2}}{\lfloor\frac{k}{r}\rfloor}$,
$rx_{k,\ell}(K_{r\times n})=k+1$ and if $k\geq r,
\ell\leq\frac{{r\choose2}\lceil\frac{k}{r}\rceil^{2}}{\lfloor\frac{k}{r}\rfloor}$,
$rx_{k,\ell}(K_{r\times n})=k$ or $k+1$ for sufficiently large $n$.
The proof is thus complete.
\end{proof}

Note that if $k\geq r\geq 3,
\ell\leq\frac{{r\choose2}\lceil\frac{k}{r}\rceil^{2}}{\lfloor\frac{k}{r}\rfloor}$,
we cannot tell $rx_{k,\ell}(K_{r\times n})=k$ or $k+1$ from Theorem
\ref{thm2}. However, the next lemma shows that when $k=r=3$,
$\ell\leq\frac{{r\choose2}\lceil\frac{k}{r}\rceil^{2}}{\lfloor\frac{k}{r}\rfloor}=3$,
$rx_{3,\ell}(K_{n,n,n})=3$ for sufficiently large $n$.
\begin{lemma}\label{lem4}
For $\ell\leq 3$, there exists a positive integer $N=N(\ell)$ such that
$rx_{3,\ell}(K_{n,n,n})=3$ for every integer $n\geq N$.
\end{lemma}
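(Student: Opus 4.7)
The plan is to establish the lower bound $rx_{3,\ell}(K_{n,n,n})\geq 3$ together with the matching upper bound $rx_{3,3}(K_{n,n,n})\leq 3$ for large $n$, from which the statement for $\ell=1,2,3$ follows by monotonicity in $\ell$. The lower bound is immediate from Claim $1$ in the proof of Theorem \ref{thm2} (applied with $k=r=3$), since any $S$-tree with $S$ in one vertex class has size at least $3$. For the upper bound I would follow the probabilistic scheme of Lemmas \ref{lem1} and \ref{lem3}: color the edges of $K_{n,n,n}$ with $\{1,2,3\}$ uniformly and independently, and show that for every $3$-set $S$ the event $\overline{C(S)}$ that $S$ fails to possess three internally disjoint rainbow $S$-trees satisfies $Pr[\overline{C(S)}] = o(n^{-3})$; a union bound over the $\binom{3n}{3}=O(n^{3})$ choices of $S$ then yields a valid $3$-coloring.

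Writing $K_{n,n,n}=G[V_{1},V_{2},V_{3}]$, the case analysis mirrors that of Lemma \ref{lem1}. If $S\subseteq V_{i}$, the $2n$ stars centered at the other two parts form internally disjoint $S$-trees, each rainbow with probability $3!/3^{3}=2/9$, so $Pr[\overline{C(S)}]$ is $o(n^{-3})$ by a standard binomial tail estimate. If $|S\cap V_{i}|=2$ and $|S\cap V_{j}|=1$ with $V_{k}$ disjoint from $S$, the same argument using only the $n$ stars centered in $V_{k}$ works.

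The main obstacle is the remaining case $S=\{x,y,z\}$ with $x\in V_{1}$, $y\in V_{2}$, $z\in V_{3}$: no part is disjoint from $S$ and, since every external vertex is non-adjacent to the vertex of $S$ in its own part, no $3$-edge star $S$-tree exists at all. My plan is to build three rainbow $S$-trees of the form ``one triangle edge plus a length-$2$ path through a Steiner vertex,'' using one Steiner vertex from each part and distributing the three triangle edges $xy,xz,yz$ so that each tree uses a different one. Explicitly, for candidates $w_{1}\in V_{1}\setminus\{x\}$, $w_{2}\in V_{2}\setminus\{y\}$, $w_{3}\in V_{3}\setminus\{z\}$, set
\[
T^{(1)}=\{yw_{1},w_{1}z,xy\},\ T^{(2)}=\{xw_{2},w_{2}z,yz\},\ T^{(3)}=\{xw_{3},w_{3}y,xz\}.
\]
The three Steiner vertices lie in distinct parts and the three triangle edges used are distinct, so these trees are automatically internally disjoint for \emph{any} such $w_{i}$. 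After conditioning on the colors of $xy$, $xz$, $yz$, each $T^{(i)}$ is rainbow for a uniformly random $w_{i}$ with probability exactly $2/9$ (the two path edges must carry the two colors distinct from the associated triangle edge), and the three events are independent across $i$ because their non-triangle edge sets are disjoint. Hence the number of good Steiner vertices in each $V_{i}$ is binomial with parameters $(n-1,2/9)$, and the probability that some part fails to supply even one is at most $3(7/9)^{n-1}=o(n^{-3})$.

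Combining the three cases yields $Pr[\overline{C(S)}]=o(n^{-3})$ for every $3$-set $S$; the union bound then gives $Pr[\bigcap_{S}C(S)]>0$ for sufficiently large $n$, proving $rx_{3,3}(K_{n,n,n})\leq 3$. Together with the lower bound and the monotonicity $rx_{3,1}\leq rx_{3,2}\leq rx_{3,3}$, this establishes the claim for $\ell=1,2,3$.
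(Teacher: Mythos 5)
Your proof is correct, and while your Cases 1 and 2 and the overall union-bound framework coincide with the paper's, your treatment of the crucial case $x\in V_{1}$, $y\in V_{2}$, $z\in V_{3}$ is genuinely different and considerably simpler. The paper bounds $Pr[\overline{D(S)}]$ by the sum over the three triangle edges $e\in\{uv,vw,wu\}$ of the probability that $e$ is not used to construct a rainbow $S$-tree, and must then condition on whether the triangle receives three, two, or one colors; the delicate point there is that a Steiner vertex in, say, $V_{2}$ can serve both the tree through $uv$ and the tree through $vw$, so the paper's subcases have to track configurations such as ``exactly one vertex $v'$ with $\{c(v'u),c(v'w)\}=\{2,3\}$'' in order to guarantee that three internally disjoint rainbow trees can still be extracted. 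You sidestep this conflict entirely by the cyclic assignment of a dedicated Steiner part to each triangle edge ($xy\mapsto V_{1}$, $yz\mapsto V_{2}$, $xz\mapsto V_{3}$), which makes \emph{every} choice of $w_{1},w_{2},w_{3}$ yield internally disjoint trees by construction; the failure probability then comes out as $3(7/9)^{n-1}$ uniformly over the colors of the triangle, with no law-of-total-probability computation. Both routes give the needed $o(n^{-3})$ bound, so nothing is lost, and your version is shorter and more transparent. A minor remark: the independence of the three part-events is not actually needed --- the union bound over the three parts suffices, with independence invoked only within each part (disjoint edge sets for distinct $w_{i}$) to obtain the factor $(7/9)^{n-1}$.
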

\begin{proof} Assume that $K_{n,n,n}=G[V_{1},V_{2},V_{3}]$ and
$|V_{1}|=|V_{2}|=|V_{3}|=n$. Note that $3\leq
rx_{3,1}(K_{n,n,n})\leq rx_{3,2}(K_{n,n,n})\leq
rx_{3,3}(K_{n,n,n})$. So it suffices to show
$rx_{3,3}(K_{n,n,n})\leq 3$. In other words, we need to find a
$3$-edge-coloring $c: $ $E(K_{n,n,n})\rightarrow \{1,2,3\}$ such
that for any $S=\{u,v,w\}\subseteq V(K_{n,n,n})$, there are at least
three internally disjoint rainbow $S$-trees. We color the edges of
$K_{n,n,n}$ with the colors $1,2,3$ randomly and independently.
Define $D(S)$ as the event that there exist at least $three$
internally disjoint rainbow $S$-trees. We only need to prove
$Pr[\overline{D(S)}]=o(n^{-3})$, since then Pr[
$\bigcap\limits_{S}D(S)$ ]$>0$ and $rx_{3,3}(K_{n,n,n})\leq 3$ for
sufficiently large $n$. We distinguish the following three cases.

\emph{Case 1}: $u,v,w$ are in the same vertex class.

Without loss of generality, assume that $\{u,v,w \}\subseteq V_{1}$.
For any vertex $z\in V_{2}\cup V_{3}$, let $T_{1}(z)$ denote the
star with $z$ as its center and $E(T_{1}(z))=\{zu, zv, zw\}$.
Obviously, $\mathcal{T}_{6}=\{T_{1}(z)|z\in V_{2}\cup V_{3}\}$  is a
set of $2n$ internally disjoint $S$-trees and Pr[T$\in
\mathcal{T}_{6}$ is a rainbow tree]=$\frac{2}{9}$. So,
\begin{center}
$Pr[\ \overline{D(S)}\ ]\leq  {2n \choose 2}(1-\frac{2}{9})^{2n-2}\leq4n^{2}(\frac{7}{9})^{2n-2}=o(n^{-3})$
\end{center}

\emph{Case 2}: $u,v,w$ are in two vertex classes.

Without loss of generality, assume that $\{u,v\} \subseteq V_{1}$
and $w\in V_{2}$. For any vertex $z\in V_{3}$, let $T_{2}(z)$ denote
a star with $z$ as its center and $E(T_{2}(z))=\{zu, zv, zw\}$.
Clearly, $\mathcal{T}_{7}=\{T_{2}(z)|z\in V_{3}\}$ is a set of $n$
internally disjoint $S$-trees and Pr[T$\in \mathcal{T}_{7}$ is a
rainbow tree]=$\frac{2}{9}$. So,
\begin{center}
$Pr[\overline{D(S)}]\leq  {n \choose 2} (1-\frac{2}{9})^{n-2}\leq n^{2}(\frac{7}{9})^{n-2}=o(n^{-3})$
\end{center}

\emph{Case 3}: $u,v,w$ are in three vertex classes.

Assume that $u\in V_{1}$, $v\in V_{2}$ and $w\in V_{3}$. For
$e\in\{uv, vw, wu\}$, define $E_{e}$ as the event that $e$ is not
used to construct a rainbow $S$-tree. Clearly,
$Pr[\overline{D(S)}]\leq Pr[ E_{uv}]+Pr[E_{vw}]+Pr[E_{wu}]$.

\emph{Subcase 3.1}: The edges $uv,vw,wu$ receive distinct colors.

Let $p_{e}=Pr[E_{e}|uv,vw,wu$ receive distinct colors] for $e\in
\{uv, vw, wu\}$. Without loss of generality, we assume $c(uv)=1,\
c(vw)=2,\ c(wu)=3$. If $uv$ is not used to construct a rainbow
$S$-tree, then for any vertex $u'\in V_{1}\setminus \{u\}$, $v'\in
V_{2}\setminus \{v\}$, $\{c(u'v), c(u'w)\}\neq\{2,3\}$ and
$\{c(v'u), c(v'w)\}\neq\{2,3\}$. So
$p_{uv}\leq(1-\frac{2}{9})^{2n-2}$. Similarly
$p_{vw}\leq(1-\frac{2}{9})^{2n-2}$,
$p_{wu}\leq(1-\frac{2}{9})^{2n-2}$. Thus
\begin{center}
$Pr[\overline{D(S)}|uv,vw,wu$ receive distinct colors$]\leq p_{uv}+
p_{vw} +p_{wu}\leq3 (\frac{7}{9})^{2n-2}$
\end{center}

\emph{Subcase 3.2}: the edges $uv,vw,wu$ receive two colors.

Let $p'_{e}=Pr[E_{e}|uv,vw,wu$ receive two colors] for $e\in \{uv,
vw, wu\}$. Without loss of generality, we assume $c(uv)= c(vw)=1,\
c(wu)=2$. If $uv$ is not used to construct a rainbow $S$-tree, then
either

$\bullet$ for any vertex $u'\in V_{1}\setminus \{u\}$, $v'\in
V_{2}\setminus \{v\}$, $\{c(u'v), c(u'w)\}\neq\{2,3\}$ and
$\{c(v'u), c(v'w)\}$ $\neq\{2,3\}$; or

$\bullet$ there exists exactly one vertex $v'\in V_{2}\setminus
\{v\}$ satisfying $\{c(v'u), c(v'w)\}=\{2,3\}$, and at the same
time, for any vertex $u'\in V_{1}\setminus \{u\}$, $w'\in
V_{3}\setminus \{w\}$, $\{c(u'v), c(u'w)\}\neq\{2,3\}$ and
$\{c(w'u), c(w'v)\}\neq\{2,3\}$.

So, $p'_{uv}\leq(\frac{7}{9})^{2n-2}+{n-1 \choose
1}\frac{2}{9}(\frac{7}{9})^{n-2}(\frac{7}{9})^{2n-2}$. Similarly,
$p'_{vw}\leq(\frac{7}{9})^{2n-2}+{n-1 \choose
1}\frac{2}{9}(\frac{7}{9})^{n-2}(\frac{7}{9})^{2n-2}$. Similar to
$p_{wu}$, we have $p'_{wu}\leq(1-\frac{2}{9})^{2n-2}$. Thus
\begin{center}
$Pr[\overline{D(S)}|uv,vw,wu$ receive two colors$] \leq p'_{uv}+
p'_{vw} +p'_{wu}\leq(2n+1) (\frac{7}{9})^{2n-2}$
\end{center}

\emph{Subcase 3.3}: The edges $uv,vw,wu$ receive the same color.

Let $p''_{e}=Pr[E_{e}|uv,vw,wu$ receive the same color] for $e\in
\{uv, vw, wu\}$. Without loss of generality, we assume
$c(uv)=c(vw)=c(wu)=1$. If $uv$ is not used to construct a rainbow
$S$-tree, then one of the three situations below must occur:

$\bullet$ for any vertex $w'\in V_{3}\setminus\{w\}$,
$\{c(w'v),c(w'u)\}\neq \{2,3\}$, and at the same time, there exists
at most one vertex  $u'\in V_{1}\setminus\{u\}$ satisfying
$\{c(u'v),c(u'w)\}= \{2,3\}$ and at most one vertex $v'\in
V_{2}\setminus\{v\}$ satisfying $\{c(v'u),c(v'w)\}= \{2,3\}$.

$\bullet$ there exists exactly one vertex $w'\in
V_{3}\setminus\{w\}$ satisfying $\{c(w'v),c(w'u)\}=\{2,3\}$, and at
the same time, there exists at most one vertex $s\in
(V_{1}\setminus\{u\})\cup (V_{2}\setminus\{v\} )$ satisfying
$\{c(sv),c(sw)\}= \{2,3\}$ or $\{c(su),c(sw)\}= \{2,3\}$.

$\bullet$ there exist at least two vertices $w_{1},w_{2}$ in
$V_{3}\setminus\{w\}$ satisfying $\{c(w_{i}u),c(w_{i}v)\}= \{2,3\}$
for i=1,2, and at the same time, for any vertex $u^{\prime}\in
V_{1}\setminus\{u\}$, $v^{\prime}\in V_{2}\setminus\{v\}$,
$\{c(u^{\prime}v),c(u^{\prime}w)\}\neq\{2,3\}$ and
$\{c(v^{\prime}u),c(v^{\prime}w)\}\neq \{2,3\}$.

So, $p''_{uv}\leq (\frac{7}{9})^{n-1}{n-1 \choose 1}{n-1 \choose
1}(\frac{7}{9})^{2n-4}+{n-1 \choose
1}\frac{2}{9}(\frac{7}{9})^{n-2}{2n-2 \choose
1}(\frac{7}{9})^{2n-3}+[1-(\frac{7}{9})^{n-1}- {n-1 \choose 1}$
$\frac{2}{9}(\frac{7}{9})^{n-2}]\cdot(\frac{7}{9})^{2n-2} $.
Obviously, $p''_{vw}=p''_{wu}=p''_{uv}$. Thus,
\begin{center}
$Pr[\overline{D(S)}|uv,vw,wu$ receive the same color$]\leq p''_{uv}+
p''_{vw} +p''_{wu}\leq3(3n^{2}+1)(\frac{7}{9})^{2n-2}.$
\end{center}
By the law of total probability, we obtain
\begin{eqnarray*}
Pr[\overline{D(S)}]&=&\frac{2}{9}\cdot Pr[\overline{D(S)}|uv,vw,wu\ have\   distinct\ colors]\\
&\ \ &+\frac{2}{3}\cdot Pr[\overline{D(S)}|uv,vw,wu\ receive\   two\ colors]\\
&\ \ &+\frac{1}{9}\cdot Pr[\overline{D(S)}|uv,vw,wu\ share\ the\
same\ color ]\\
&\leq&\frac{2}{9}\cdot 3\cdot (\frac{7}{9})^{2n-2}+\frac{2}{3}(2n+1) (\frac{7}{9})^{2n-2}+\frac{1}{9}\cdot3(3n^{2}+1) (\frac{7}{9})^{2n-2}\\
&=&o(n^{-3}).
\end{eqnarray*}
Therefore, there exists a positive integer $N=N(\ell,r)$ such that
$Pr[\ \bigcap\limits_{S}D(S)\ ]>0$ for all integers $n\geq N$, which
implies $rx_{3,3}(K_{n,n, n})\leq 3$ for $n\geq N$.
\end{proof}

From Theorem \ref{thm2} and Lemma \ref{lem4}, the $(3,\ell)-$rainbow
index of $K_{r\times n}$ is totally determined for sufficiently
large $n$.

\begin{theorem}\label{thm3}
For every pair of positive integers $\ell,r$ with $r\geq3$, there
exists a positive integer $N=N(\ell,r)$ such that
\begin{displaymath}
rx_{3,\ell}(K_{r\times n}) = \left\{ \begin{array}{ll}
3 & \textrm{if $r=3, \ell=1,2,3 $}\\
4 & \textrm{if $r=3, \ell\geq4$} \\
3 & \textrm{if $r\geq4$}\ \ \ \
\end{array} \right.
\end{displaymath}
for every integer $n\geq N$.
\end{theorem}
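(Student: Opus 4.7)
The plan is to derive Theorem \ref{thm3} by a short case analysis on the pair $(r,\ell)$, matching each case either to Theorem \ref{thm2} or to Lemma \ref{lem4}. Nothing new needs to be proved from scratch; the theorem is really a bookkeeping consolidation of the earlier results once we specialize to $k=3$.

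First I will handle $r\geq 4$. Here $k=3<r$, so Theorem \ref{thm2} directly delivers $rx_{3,\ell}(K_{r\times n})=k=3$ once $n$ is at least the threshold $N(3,\ell,r)$ appearing there. This is the last branch of the piecewise formula.

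Next I will specialize to $r=3$, in which case $k=r=3$. A direct computation of the critical quantity gives
\[
\frac{\binom{r}{2}\lceil k/r\rceil^{2}}{\lfloor k/r\rfloor}=\frac{3\cdot 1^{2}}{1}=3,
\]
so Theorem \ref{thm2} already settles the subcase $\ell\geq 4$, yielding $rx_{3,\ell}(K_{3\times n})=k+1=4$ for $n$ large enough. For $\ell\leq 3$, Theorem \ref{thm2} only narrows the value to $\{3,4\}$, and the remaining work is to rule out the value $4$.

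To eliminate $4$ in the range $\ell\leq 3$, I will invoke Lemma \ref{lem4}, which states precisely that $rx_{3,\ell}(K_{n,n,n})=3$ for all sufficiently large $n$ whenever $\ell\leq 3$; combined with the trivial lower bound $rx_{3,\ell}(K_{r\times n})\geq 3$ from Claim $1$ of Theorem \ref{thm2}, this gives equality. Taking $N=N(\ell,r)$ to be the maximum of the thresholds supplied by Theorem \ref{thm2} and Lemma \ref{lem4} completes the argument. There is no real obstacle at this stage: the genuinely hard step, the probabilistic analysis of the three subcases in $K_{n,n,n}$ for $\ell=3$, was already carried out in Lemma \ref{lem4}; everything else is a matter of reading off the appropriate case from the earlier theorem.
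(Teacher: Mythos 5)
Your proposal is correct and matches the paper exactly: the paper also obtains Theorem \ref{thm3} purely by combining Theorem \ref{thm2} (the case $k=3<r$ for $r\geq4$, and the computation $\binom{3}{2}\lceil 3/3\rceil^{2}/\lfloor 3/3\rfloor=3$ for $r=3$) with Lemma \ref{lem4} to settle $\ell\leq3$. The case bookkeeping and the choice of $N$ as a maximum of the earlier thresholds are exactly what the paper intends.
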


With similar arguments, we can expand these results to more general
complete multipartite graphs $K_{n_{1},n_{2},\cdots,n_{r}}$ with
$n_{1}\leq n_{2}\leq\cdots\leq n_{r}$ and $n_{r}=O(n_{1}^{\alpha})$,
where $\alpha\in \mathbb{R}$ and $\alpha\geq1$.

\begin{theorem}\label{thm4}
Let $n_{1}\leq n_{2}\leq\cdots\leq n_{r}$ be $r$ positive integers
with $n_{r}=O(n_{1}^{\alpha})$, $\alpha\in \mathbb{R}$ and
$\alpha\geq1$. For every triple of positive integers  $k,\ell,r$
with $k\geq 3$ and $r\geq 3$, there exists a positive integer
$N=N(k,\ell,r)$ such that
\begin{displaymath}
rx_{k,\ell}(K_{n_{1},n_{2},\cdots,n_{r}}) = \left\{ \begin{array}{ll}
k & \textrm{if $k< r$}\\
k \ or \ k+1 & \textrm{if $k\geq r, \ell\leq\frac{{r\choose2}\lceil\frac{k}{r}\rceil^{2}}{\lfloor\frac{k}{r}\rfloor}$} \\
k+1 & \textrm{if $k\geq r, \ell>\frac{{r\choose2}\lceil\frac{k}{r}\rceil^{2}}{\lfloor\frac{k}{r}\rfloor}$}\ \ \ \
\end{array} \right.
\end{displaymath}
for every integer $n_{1}\geq N$.
Moreover, when $k=r=3$, $\ell\leq\frac{{r\choose2}\lceil\frac{k}{r}\rceil^{2}}{\lfloor\frac{k}{r}\rfloor}=3$,
there exists a positive integer $N=N(\ell)$ such that $rx_{3,\ell}(K_{n_{1},n_{2},n_{3}})=3$ for every integer $n_{1}\geq N$.
\end{theorem}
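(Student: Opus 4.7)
The plan is to mirror the probabilistic and structural arguments from Theorem \ref{thm2} and Lemma \ref{lem4}, replacing every occurrence of $n$ by $n_1$ in the relevant bounds. The only new ingredient we need is the observation that the union bound still closes: the total number of candidate $k$-sets is $\binom{n_1+n_2+\cdots+n_r}{k}\leq (rn_r)^k = O(n_1^{k\alpha})$, while the probability that a fixed $S$ fails will decay exponentially in $n_1$, so the product still tends to $0$.

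For the lower bounds, the arguments from Claim $1$ of Theorem \ref{thm2} are purely structural: they require only that each part contains enough vertices to host the chosen $S$. Specifically, $rx_{k,\ell}(K_{n_{1},n_{2},\cdots,n_{r}})\geq k$ follows by placing $S$ inside the smallest part $V_1$ (valid once $n_1\geq k$), and the sharper bound $rx_{k,\ell}\geq k+1$ when $k\geq r$ and $\ell>\binom{r}{2}\lceil k/r\rceil^{2}/\lfloor k/r\rfloor$ follows by choosing $S$ with $\lfloor k/r\rfloor$ or $\lceil k/r\rceil$ vertices in each class (valid once $n_1\geq \lceil k/r\rceil$). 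Neither argument uses anything about the part sizes beyond these minima.

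For the upper bounds, I would rerun the random $(k+1)$- or $k$-coloring proofs of Claims $2$ and $3$ of Theorem \ref{thm2} almost verbatim. In the case where some $V_i$ misses $S$, the star family centered in $V_i$ provides $|V_i|\geq n_1$ internally disjoint $S$-trees, each rainbow with a fixed positive probability. In the case where every $V_i$ meets $S$, the double-star family built from pairs $\{v_{1i},v_{2i}\}$ with $v_{1i}\in V_1\setminus V_1'$ and $v_{2i}\in V_2\setminus V_2'$ provides at least $\min(|V_1\setminus V_1'|,|V_2\setminus V_2'|)\geq n_1-k$ internally disjoint $S$-trees, again rainbow with fixed positive probability. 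In both cases Pr$[\overline{C(S)}]$ is bounded above by $n_1^{\ell-1}(1-p)^{n_1-O(1)}$, which decays exponentially in $n_1$; combined with $\binom{N}{k}=O(n_1^{k\alpha})$ this allows the union bound to yield Pr$[\ \bigcap\limits_{S}C(S)\ ]>0$ for sufficiently large $n_1$.

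For the last assertion about $k=r=3$ and $\ell\leq 3$, the construction from Lemma \ref{lem4} carries over after the same renaming. Each star family $\mathcal{T}_6,\mathcal{T}_7$ still has size at least $2n_1$ or $n_1$, and in Case $3$ the counts over vertices in $V_1\setminus\{u\}$, $V_2\setminus\{v\}$, $V_3\setminus\{w\}$ are each at least $n_1-1$. The anticipated main obstacle is Subcase $3.3$ of that proof, the delicate case in which $uv,vw,wu$ all share one color, since the bound on $p''_{uv}$ mixes counts from three different classes; one only needs to upper-bound each multiplicative count by the class maximum $n_r\leq Cn_1^{\alpha}$ while keeping every exponent on $7/9$ at least $n_1-1$, which still gives Pr$[\overline{D(S)}]=o(n_1^{-3\alpha})$ as required for the union bound.
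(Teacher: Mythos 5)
Your proposal is correct and follows exactly the route the paper intends: the paper gives no separate proof of Theorem \ref{thm4}, stating only that it follows ``with similar arguments'' from Theorem \ref{thm2} and Lemma \ref{lem4}, and you have correctly identified the only two points that need checking --- that the lower-bound constructions require nothing beyond $n_1$ being large enough to host $S$, and that the union bound still closes because the number of $k$-sets is $O(n_1^{k\alpha})$ while each failure probability decays exponentially in $n_1$. No gaps.
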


\section{Concluding remarks}

In this paper, we determine the $(k,\ell)$-rainbow index for some
complete bipartite graphs $K_{m,n}$ with $m=O(n^{\alpha})$,
$\alpha\in \mathbb{R}$ and $\alpha\geq1$. But, we have no idea of
the $(k,\ell)$-rainbow index for general complete bipartite graphs,
e.g., when $m=O(2^{n})$, is $rx_{k,\ell}(K_{m,n})$ equal to $k+1$ or
$k+2$ ? The question seems not easy, since even for the simplest
case $k=2$, $rc_{\ell}(K_{m,n})=3$ or $4$ for sufficiently large
$m,n$ is still an open problem; see \cite{Fujita}.

It is also noteworthy that we use the bipartite Ramsey number in
Lemma \ref{lem2} to show that $rx_{k,\ell}\geq k+1$ for sufficiently
large $n$. But, unfortunately the multipartite Ramsey number does
not always exist; see \cite{David}. Instead, we analyze the
structure of $S$-trees in complete multipartite graphs and give some
lower bounds for $rx(K_{r\times n})$. Since these bounds are weak
(they do not involve coloring), we can not tell
$rx_{k,\ell}(K_{r\times n})=k$ or $k+1$ when $k\geq r$ and
$\ell\leq\frac{{r\choose2}\lceil\frac{k}{r}\rceil^{2}}{\lfloor\frac{k}{r}\rfloor}$,
except for the simple case $k=3$; see Lemma \ref{lem4}. An answer to
this question would be interesting.


\begin{thebibliography}{99}

\bibitem{Alon} N. Alon, J.H. Spencer, \emph{The Probabilistic Method}, John Wiley \& Sons, 2004.

\bibitem{Bondy} J. Bondy, U.S.R. Murty, \emph{Graph Theory}, GTM 244, Springer, 2008.

\bibitem{Cai} Q. Cai, X. Li, J. Song, \emph{Solutions to conjectures on the $(k,\ell)$-rainbow
index of complete graphs}, Networks 62(2013), 220-224.

\bibitem{ChartrandGP} G. Chartrand, G. Johns, K. McKeon, P. Zhang,
\emph{Rainbow connection in graphs}, Math. Bohem. 133(2008), 85-98.

\bibitem{ChartrandGL} G. Chartrand, G. Johns, K. McKeon, P. Zhang,
\emph{ The rainbow connectivity of a graph}, Networks 54(2)(2009),
75-81.

\bibitem{Chartrand} G. Chartrand, F. Okamoto, P. Zhang,
\emph{Rainbow trees in graphs and generalized connectivity}, Networks 55(2010), 360-367.

\bibitem{Chvatal} V. Chv$\acute{a}$tal, \emph{On finite polarized partition relations}, Canad. Math. Bull. 12(1969), 321-326.

\bibitem{Conlon} D. Conlon, \emph{A new upper bound for the bipartite Ramsey problem}, J. Graph Theory 58(2008), 351-356.

\bibitem{David} D. Day, W. Goddard, M. A. Henning, H. C. Swart, \emph{Multipartite Ramsey numbers}, Ars Combin. 58(2001), 23-32.

\bibitem{erdos} P. Erd\H{o}s, R. Rado, \emph{A partition calculus in set theory}, Bull. Amer. Math. Soc. 62(1956), 229-489.

\bibitem{Fujita} S. Fujita, H. Liu, C. Magnant, \emph{Rainbow $k$-connection in dense graphs}, J. Combin. Math. Combin. Comput., to appear

\bibitem{Hattingh} J.H. Hattingh, M.A. Henning, \emph{Bipartite Ramsey theory}, Utilitas Math. 53(1998), 217-230.

\bibitem{LiSun3} X. Li, Y. Shi, Y. Sun, \emph{Rainbow connections of graphs: A Survey}, Graphs \& Combin. 29(2013), 1-38.

\bibitem{LiSun} X. Li, Y. Sun, \emph{Rainbow Connections of Graphs}, Springer Briefs in Math., Springer, New York, 2012.

\end{thebibliography}
\end{document}